\documentclass[11pt]{article}

\usepackage{amssymb, amsmath, amsthm, graphicx, xcolor}
\usepackage[left=1in,top=1in,right=1in]{geometry}
\date{}

\theoremstyle{plain}
      \newtheorem{theorem}{Theorem}[section]
      \newtheorem{lemma}[theorem]{Lemma}

      \newtheorem{conjecture}[theorem]{Conjecture}
\theoremstyle{definition}

\theoremstyle{remark}

 \newcommand{\twr}{{{\rm twr}}}

\title{Ramsey numbers of cliques versus monotone paths}
\author{Dhruv Mubayi\thanks{Department of Mathematics, Statistics, and Computer Science, University of Illinois, Chicago, IL, 60607 USA.  Research partially supported by NSF awards DMS-1763317, DMS-1952767, DMS-2153576, a Humboldt Research Award and a Simons Fellowship.   Email: {\tt mubayi@uic.edu}.} \and Andrew Suk\thanks{Department of Mathematics, University of California at San Diego, La Jolla, CA, 92093 USA. Supported by NSF CAREER award DMS-1800746 and  NSF award DMS-1952786. Email: {\tt asuk@ucsd.edu}.}}

\begin{document}

\maketitle

\begin{abstract} 
One formulation of the Erd\H os-Szekeres monotone subsequence theorem states that  for any red/blue coloring of the edge set of the complete graph on $\{1, 2, \ldots, N\}$, there exists  a monochromatic red $s$-clique or a monochromatic blue increasing path $P_n$ with $n$ vertices, provided $N >(s-1)(n-1)$. 
Here, we prove a similar statement as above in the off-diagonal case for triple systems, with the quasipolynomial bound $N>2^{c(\log n)^{s-1}}$. For the $t$th power $P_n^t$ of the ordered increasing graph path with $n$ vertices, we prove a near linear bound  $c\, n(\log n)^{s-2}$ which improves the previous  bound that applied to a more general class of graphs than $P_n^t$ due to Conlon-Fox-Lee-Sudakov.
\end{abstract}

\section{Introduction}

A well-known theorem of Erd\H os and Szekeres \cite{es} states that any sequence of $(n-1)^2 + 1$ distinct real numbers contains a monotone subsequence of length at least $n$.  This is a classical result in combinatorics and its generalizations and extensions have many important consequences in geometry, probability, and computer science. See Steele \cite{st} for 7 different proofs along with several applications.  Here, we study its extension in the ordered hypergraph setting.

 An \emph{ordered} $k$-uniform hypergraph $H$ on $n$ vertices is a hypergraph whose vertices are ordered  $\{1,2,\ldots,n\}$. Given two ordered $k$-uniform hypergraphs $G$ and $H$, the Ramsey numbers $r_k(G,H)$ is the minimum $N$ such that for every red/blue coloring of the $k$-tuples of $\{1,2,\ldots, N\}$, there is either a red copy of $G$ or a blue copy of $H$. When $G = H$, we simply write $r_k(H) = r_k(H,H)$.  We let $r_k(H;q)$ to be the minimum integer $N$ such that for every $q$-coloring of the $k$-tuples of $[N]= \{1,2,\ldots, N\}$, there is a monochromatic copy of $H$.  We write $K^{(k)}_n$ for the complete $k$-uniform hypergraph on $n$ vertices.   A \emph{monotone path of size $n$}, denoted by $P^{(k)}_n$, is an ordered $k$-uniform hypergraph whose vertex set is $\{1,2,\ldots, n\}$, and $n-k + 1$ edges of the form $(i,i + 1,\ldots, i + k - 1)$, for $i = 1,\ldots, n-k + 1$.  In order to avoid the excessive use of superscripts, we remove them when the uniformity is clear.   For example, we write $r_k(K_s,P_n)=  r_k(K^{(k)}_s,P^{(k)}_n)$.

The proof of the Erd\H os and Szekeres monotone subsequence theorem, and also  Dilworth’s theorem on partially ordered sets \cite{d}, implies that

$$r_2(K_s,P_n) = (s-1)(n-1) + 1. $$

\noindent  However for $k$-uniform hypgeraphs, when $k \geq 3$, $r_k(K_s,P_n)$ is much less understood.  In \cite{ms}, the authors showed a surprising connection between $r_{k}(K_s,P_{n})$ and the classical Ramsey number $r_{k - 1}(K_s;q)$.  More precisely, they showed that for $q \geq 2$

\begin{equation}\label{msbound}
    r_{k-1}(K_{\lfloor s/q\rfloor };q)  \leq r_{k}(K_{s}, P_{q + k - 1}) \leq r_{k -1}(K_s;q).
\end{equation}

\noindent   Hence, for $q = 2$, $k =O(1)$, and $s$ tending to infinity, determining the tower growth rate of $r_k(K_s,P_{k + 1})$ is equivalent to determining the tower growth rate of the classical Ramsey number $r_{k-1}(K_s)$.  Classical results of Erd\H os \cite{e} and Erd\H os and Szekeres \cite{es} imply that $r_2(K_s)   = 2^{\Theta(s)}$ (see also \cite{sp,sah}).  Unfortunately for $k$-uniform hypergraphs, when $k\geq 3$, there is an exponential gap between the best known lower and upper bounds for $r_k(K_s)$.  More precisely, 

$$\twr_{k - 1}(\Omega(s^2)) < r_k(K_s) < \twr_k(O(s)),$$

\noindent where the tower function $\twr_k(x)$ is defined recursivly by $\twr_1(x) = x$ and $\twr_{i+1}(x) = 2^{\twr_i(x)}$
(see \cite{EH,EHR,ER}).  A notoriously difficult conjecture of Erd\H os, Hajnal, and Rado states that the upper bound is the correct tower growth rate.
 
Unfortunately, (\ref{msbound}) doesn't shed much light on $r_k(K_s,P_n)$ when $s$ is fixed, and $n$ tends to infinity.  In this direction, the first author~\cite{m} showed that $r_3(K_4,P_n) = O(n^{21})$ and made the following conjecture.

\begin{conjecture}
We have $r_3(K_s,P_n) = O(n^c)$, where $c = c(s)$.
\end{conjecture}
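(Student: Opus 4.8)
The plan is to attack the conjecture by induction on $s$, carrying along the auxiliary function that records longest blue monotone paths. Fix a red/blue coloring of $\binom{[N]}{3}$ with no blue $P_n$. For each pair $u<v$ let $f(u,v)\in\{2,\dots,n-1\}$ be the number of vertices in a longest blue monotone path whose last two vertices are $u$ and $v$; this is well defined precisely because a blue $P_n$ is forbidden. The one structural fact used repeatedly is that if $u<v<w$ and $\{u,v,w\}$ is blue then $f(v,w)\ge f(u,v)+1$ (append $w$ to a longest blue path ending $\dots,u,v$); equivalently, \emph{every} triple $\{u,v,w\}$ with $u<v<w$ and $f(u,v)\ge f(v,w)$ is red. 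Hence it suffices to locate vertices $v_1<\cdots<v_s$ with $f(v_i,v_j)\ge f(v_j,v_\ell)$ for all $i<j<\ell$, since then all $\binom{s}{3}$ triples among them are red.

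For the base case $s=3$ this is immediate, and recovers $r_3(K_3,P_n)=O(n)$: if no such triple existed, then (since $K_3^{(3)}$ is a single triple) every triple is blue, so writing $a(v)=\max_{u<v}f(u,v)$ and $b(v)=\min_{w>v}f(v,w)$ one gets $b(v)\ge a(v)+1$ for each interior $v$, while $b(v)\le f(v,v')\le a(v')$ whenever $v<v'$; thus $a(2)<b(2)\le a(3)<b(3)\le\cdots<b(N-1)$ is a chain with $N-2$ strict increases inside $\{2,\dots,n-1\}$, forcing $N=O(n)$.

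For the inductive step I would first \emph{regularize} the clique to be built. The aim is a \emph{canonical} red $K_{s-1}$: vertices $v_1<\cdots<v_{s-1}$ together with values $c_1\ge c_2\ge\cdots\ge c_{s-2}$ such that $f(v_i,v_j)=c_i$ for all $i<j$. Within such a set all triples are automatically red, and a vertex $v_0<v_1$ completes it to a red $K_s$ exactly when $f(v_0,v_i)\ge c_i$ for $i=1,\dots,s-2$. The strategy is then: (i) derive from $f$ a coloring of the pairs of a large subvertex set and apply the induction hypothesis (plus pigeonhole on the at most $n-2$ possible values of the $c_i$) to produce \emph{many} disjoint canonical red $K_{s-1}$'s, or else a blue $P_n$; (ii) assuming none of these extends leftward, read off from each near-miss the blue monotone path witnessing the value $c_i$ that fails, and stitch these across configurations into a blue monotone path on more than $n$ vertices, a contradiction. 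Step (ii), converting ``many near-misses'' into one long blue path, is the technical heart, and is where an Erd\H os--Szekeres/Dilworth extraction on the $f$-values is applied; a successful run should recover and improve the bound $r_3(K_4,P_n)=O(n^{21})$ of \cite{m}.

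The main obstacle -- and the reason the best known bound is only quasipolynomial, $2^{c(\log n)^{s-1}}$ -- is that the auxiliary object at each stage is a coloring of pairs by $\Theta(n)$ ``colors'' (the $f$-values), and every natural way to regularize it into the canonical form above costs either a power of $N$ or a factor of $\log n$ in the exponent, so the recursion effectively reads $r_3(K_s,P_n)\le r_3(K_{s-1},P_n)^{\Theta(\log n)}$ rather than $r_3(K_s,P_n)\le r_3(K_{s-1},P_n)^{O(1)}$. Proving the conjecture seems to require an argument that advances the induction while losing only a constant factor in the exponent -- presumably by tracking a coarser invariant than the raw $f$-coloring (for instance only the relative order of the $f$-values among the pairs incident to the current partial clique, so that the effective number of colors stays bounded in terms of $s$ rather than growing with $n$), which amounts to understanding off-diagonal clique Ramsey numbers for the special ``monotone'' edge-colorings that $f$ produces.
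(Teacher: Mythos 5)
This statement is a \emph{conjecture} in the paper, not a theorem: the paper proves only the quasipolynomial bound $r_3(K_s,P_n) < 2^{c_s(\log n)^{s-1}}$ (via the non-increasing sets machinery of Section 2), and the polynomial bound is left open. Your proposal does not close that gap. The setup you describe --- the auxiliary coloring $f(u,v)$ recording the longest blue monotone path ending in the pair $(u,v)$, and the observation that any triple $u<v<w$ with $f(u,v)\ge f(v,w)$ must be red --- is exactly the reduction the paper itself uses (Theorem 2.2), and the base case $s=3$ is fine (indeed trivially $r_3(K_3,P_n)\le n$: with no red triple all triples are blue, so $[N]$ itself is a blue monotone path). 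But the inductive step is a plan, not an argument. You never specify how to extract the ``canonical'' red $K_{s-1}$'s with only polynomial loss, and, more importantly, your step (ii) --- converting many configurations that fail to extend leftward into a single blue monotone $P_n$ --- is left entirely unexecuted. Stitching together the blue paths that witness the failed inequalities $f(v_0,v_i)<c_i$ across different configurations requires those paths to be compatible (vertex-monotone, sharing connecting edges), and nothing in the sketch supplies this; that is precisely the point where all known arguments pay a factor of $\log n$ in the exponent per induction step. (A small additional slip: $f(v_0,v_i)\ge c_i$ for all $i$ is a \emph{sufficient} condition for $v_0$ to complete a red $K_s$, not an ``exactly when.'')

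Your closing paragraph in fact concedes the issue: as set up, the recursion behaves like $r_3(K_s,P_n)\le r_3(K_{s-1},P_n)^{\Theta(\log n)}$, which reproduces the $2^{O((\log n)^{s-1})}$ bound the paper already establishes rather than $O(n^{c(s)})$. The suggestion of tracking a coarser invariant (e.g.\ only the relative order of $f$-values incident to the partial clique) is a reasonable direction, but it is stated as a hope, not carried out. As written, the proposal is an accurate restatement of the known reduction together with a correct diagnosis of the obstacle --- which is useful context, but it is not a proof of the conjecture, and no such proof exists in the paper either.
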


\noindent Our first result establishes a quasi-polynomial bound for $r_3(K_s,P_n)$, when $s$ is fixed.  Throughout this paper, all logarithms are in base 2.

\begin{theorem} \label{quasipoly}
We have $r_3(K_s,P_n)< 2^{c_s(\log n)^{s-1}},$ where $c_s = 5^s s!.$
\end{theorem}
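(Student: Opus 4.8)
The plan is to prove this by induction on $s$, with the key mechanism being a reduction from a coloring of triples of $[N]$ to a coloring of *pairs* of a large subset, so that one can invoke the ordinary graph Ramsey bound $r_2(K_s) \le 4^s$ together with the $s-1$ case of the theorem. The base case $s = 2$ is the Erdős–Szekeres statement $r_3(K_2, P_n) = n$ (a red $K_2$ is a single red triple, so with no red triple every triple is blue and $[n]$ itself contains a blue $P_n$); one checks $c_2 = 5^2 \cdot 2 = 50$ comfortably covers this. For the inductive step, suppose the bound holds for $s-1$ and let $N = 2^{c_s(\log n)^{s-1}}$; I want to show any red/blue coloring $\chi$ of $\binom{[N]}{3}$ with no red $K_s$ contains a blue $P_n$.

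First I would set up the standard monotone-path potential function. For each vertex $v \in [N]$, and each pair $i < v$, look at the "link" coloring on triples containing $v$ as the top element: define, for a vertex $v$, the quantity $f(v)$ to be the largest $m$ such that there is a blue monotone path of size $m$ ending at $v$. If $f(v) \ge n$ for some $v$ we are done, so assume $f(v) \le n-1$ for all $v$; this partitions $[N]$ into $\le n-1$ classes $V_1, \dots, V_{n-1}$, and a large class $V_j$ with $|V_j| \ge N/(n-1)$ has the crucial property that no blue triple $(a,b,c)$ with $a<b<c$ lies entirely inside $V_j$ *and* extends a path — more precisely, by the usual argument one extracts a set $W$ of size $\ge N/(n-1)$ on which one can control when a blue triple can be appended. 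The real content, and the place where the triple-to-pair reduction happens, is this: on $W$, consider for each pair $a < b$ in $W$ the link of $\{a,b\}$ *downward* (triples $\{x,a,b\}$ with $x<a$) and *upward*; the monotone-path obstruction forces, for each pair, a monochromatic-in-one-direction structure that lets me define an auxiliary coloring $\psi$ of $\binom{W}{2}$ whose color records "which of finitely many types" the pair has. The number of types is bounded (a constant depending on $s$ through the recursion, absorbed into $c_s$).

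The heart of the induction is then: either $\psi$ yields a large $\psi$-homogeneous set which, combined with having no red $K_s$, gives a red $K_{s-1}$ in a derived coloring to which we apply the $(s-1)$-case — or else the structure directly produces the blue $P_n$. Concretely, I expect to peel one "layer": find a vertex $v_1$ and a set $W_1 \subseteq W$ of size $\ge |W| / r_3(K_{s-1}, P_n)$ such that all triples $\{v_1\} \cup \{a,b\}$ with $a<b$ in $W_1$ are red (if they were blue we could grow the path, contradicting maximality of the class), then recurse inside $W_1$: after $s-1$ such steps we have collected vertices $v_1, \dots, v_{s-1}$ that, together with any further vertex, would-be-form a red $K_s$ unless the relevant triples among $v_1,\dots,v_{s-1}$ are blue — chasing this forces either a red $K_s$ (contradiction) or enough blue triples to assemble $P_n$ greedily. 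Tracking the size loss: each of the $s-1$ layers costs a factor of roughly $(n-1) \cdot r_3(K_{s-1}, P_n) < (n-1)\cdot 2^{c_{s-1}(\log n)^{s-2}}$, and we need $N$ to survive $s-1$ such divisions while still being $\ge n$, which after taking logs gives the requirement $c_s (\log n)^{s-1} \ge (s-1)\big(\log(n-1) + c_{s-1}(\log n)^{s-2}\big) + \log n$; since $c_{s-1} = 5^{s-1}(s-1)!$, the right side is at most $(s-1) c_{s-1} (\log n)^{s-2} \cdot (1 + o(1)) \cdot$ (something $\le 5 \log n$ worth of slack), and $5^s s! = 5 s \cdot 5^{s-1}(s-1)! \ge 5(s-1)\cdot 5^{s-1}(s-1)!$ provides exactly the factor $5(s-1) \ge (s-1) + $ overhead needed after multiplying through by $\log n$.

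The main obstacle I anticipate is making the triple-to-pair reduction *clean*: one must argue that the maximality of the path-length classes genuinely forces a bounded number of monochromatic "link types" on pairs, and that a homogeneous set in the auxiliary pair-coloring translates into the correct inductive object (a red $K_{s-1}$-free or $P_n$-free configuration one uniformity down in effect, i.e. back to triples on a smaller clique size). Getting the constant $5^s s!$ rather than something larger requires being careful that the per-layer loss is a factor of $n$ times the $(s-1)$-Ramsey number and *not* worse (e.g. not $n^2$ or an extra tower), so I would be vigilant that the "downward link" argument only pays one factor of $n$ per peeled vertex. Once the recursion is set up correctly the arithmetic closes, but the combinatorial bookkeeping linking "no blue $P_n$" to "bounded link structure" to "apply induction" is the delicate part.
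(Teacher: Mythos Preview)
Your proposal has the right opening instinct but a genuine gap in execution. The potential must be defined on \emph{pairs}, not vertices: for $u<v$ set $\chi(u,v)$ to be the length of the longest blue monotone $3$-path whose last two vertices are $u,v$. Then a blue triple $(u,v,w)$ forces $\chi(v,w)\ge\chi(u,v)+1$, which is exactly what drives everything; your vertex function $f(v)$ lacks this property, since a blue triple $(a,b,c)$ with $f(a)=f(b)=f(c)$ need not extend the path witnessing $f(a)$. With the pair potential in hand the task becomes purely graph-theoretic: in any $(n-2)$-coloring $\chi$ of $\binom{[N]}{2}$, find an $s$-set $S$ that is \emph{non-increasing} (every triple $u<v<w$ in $S$ has $\chi(u,v)\ge\chi(v,w)$ with $\chi(u,w)$ equal to one of the two). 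Any such $S$ is automatically a red $K_s$ under the original triple coloring.

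Your peeling step --- find $v_1$ and $W_1$ of size $\ge |W|/r_3(K_{s-1},P_n)$ with all triples $\{v_1,a,b\}$, $a,b\in W_1$, red --- is where the argument breaks: you offer no mechanism producing such a $v_1$, and the parenthetical ``if they were blue we could grow the path'' is precisely what fails for the vertex potential. The paper's induction has a different shape. Writing $f(s;q)$ for the least $N$ forcing a non-increasing $s$-set in every $q$-coloring of pairs, one bounds $f(s;q)$ by double induction on $s$ and $q$: supersaturation yields many non-increasing $(s{-}1)$-sets with a common color pattern $\kappa_1\ge\cdots\ge\kappa_{s-2}$; pigeonhole on the augmented sequence $q=\kappa_0\ge\kappa_1\ge\cdots\ge\kappa_{s-1}=1$ gives an index $i$ with $\kappa_{i-1}-\kappa_i\ge q/s$; freezing all but the $i$th slot produces a large set $B$ of interchangeable candidates, and either some pair in $B$ has color in $[\kappa_i,\kappa_{i-1}]$ (insert both to get a non-increasing $s$-set) or $B$ misses $q/s$ colors and one recurses on $f(s;q(s-1)/s)$. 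The $(\log n)^{s-1}$ exponent --- in fact the paper obtains $(\log n)^{s-2}$ --- comes from this multiplicative shrinking of the number of colors, not from iterated division by $r_3(K_{s-1},P_n)$.
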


\noindent Together with the well-known neighborhood chasing argument of Erd\H os and Rado \cite{ER}, we have the following.

\begin{theorem}
  For $k \geq 3$, we have  $r_k(K_s,P_n) = \twr_{k-2}\left(2^{c(\log n)^{s-1}}\right),$ where $c = c(s)$.
\end{theorem}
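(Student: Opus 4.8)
The plan is to induct on $k$, taking Theorem~\ref{quasipoly} as the base case $k=3$ (recall $\twr_1(x)=x$) and the Erd\H os--Rado neighborhood-chasing argument as the inductive step, which costs exactly one exponential, i.e.\ one level of the tower. The inductive step will be the reduction that for $k\ge 4$,
\begin{equation*}
r_k(K_s,P_n)\ \le\ 2^{\binom{m-1}{k-1}+m}, \qquad m:=r_{k-1}(K_s,P_{n-1})+1 .
\end{equation*}
Granting this, start from $r_3(K_s,P_{n-k+3})<2^{c_s(\log(n-k+3))^{s-1}}\le 2^{c_s(\log n)^{s-1}}$ (legitimate since $k$ is fixed) and apply the reduction $k-3$ times. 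A short induction on $j=3,\dots,k$ then gives $r_j(K_s,P_{n-k+j})\le \twr_{j-2}\!\big(2^{c(\log n)^{s-1}}\big)$ for a suitable constant $c=c(s,k)$: each step turns a bound of the form $\twr_{j-3}(X)$ into one of the form $2^{(\twr_{j-3}(X))^{C}}$ with $C=C(k)$, and for $X$ large --- here $X\ge 2^{c_s(\log n)^{s-1}}$, already well inside the tower hierarchy --- this is at most $\twr_{j-2}(X^{C'})$, so the polynomial blow-up and the $\pm 1$ shifts in the path length and in $m$ are all absorbed into the constant sitting at the bottom of the tower. This establishes the upper bound in the theorem; the matching lower bound of the same shape is a separate, purely constructive matter, obtained by iterating the Erd\H os--Hajnal stepping-up operation for monotone paths, and I would treat it independently.

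For the reduction, fix a red/blue coloring $\chi$ of $\binom{[N]}{k}$ with no red $K^{(k)}_s$ and no blue $P^{(k)}_n$, and suppose $N\ge 2^{\binom{m-1}{k-1}+m}$. Following Erd\H os and Rado, greedily construct $v_1<v_2<\dots<v_m$ together with nested ``active'' sets $W_1\supseteq W_2\supseteq\cdots$: put $v_1=1$, $W_1=\{2,\dots,N\}$, and having chosen $v_1<\dots<v_i$ and $W_i\subseteq\{w:w>v_i\}$, set $v_{i+1}=\min W_i$ and obtain $W_{i+1}\subseteq W_i\setminus\{v_{i+1}\}$ by processing one at a time the $\binom{i}{k-2}$ sets $T$ consisting of $v_{i+1}$ together with $k-2$ of $v_1,\dots,v_i$, at each turn passing to the larger of the two color classes of the surviving $w$'s under $w\mapsto\chi(T\cup\{w\})$. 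Then $|W_{i+1}|\ge(|W_i|-1)/2^{\binom{i}{k-2}}$, and since $\sum_{i=1}^{m-2}\binom{i}{k-2}=\binom{m-1}{k-1}$ the construction runs all the way to $v_m$. By construction the reduced coloring $\chi'$ on $\binom{[m]}{k-1}$ defined by $\chi'(\{i_1<\dots<i_{k-1}\})=\chi(v_{i_1},\dots,v_{i_{k-1}},v_j)$ for any $j>i_{k-1}$ does not depend on $j$. Since $m-1\ge r_{k-1}(K_s,P_{n-1})$, the restriction of $\chi'$ to $[m-1]$ contains a red $K^{(k-1)}_s$ or a blue $P^{(k-1)}_{n-1}$. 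In the first case the corresponding $v$'s form a red $K^{(k)}_s$ in $\chi$, because every $k$-subset of them inherits the (red) reduced color of its first $k-1$ vertices. In the second case, if the blue $P^{(k-1)}_{n-1}$ sits on indices $j_1<\dots<j_{n-1}\le m-1$, then $v_{j_1}<\dots<v_{j_{n-1}}<v_m$ is a blue $P^{(k)}_n$ in $\chi$: its $\ell$-th edge is blue precisely when $\chi'(\{j_\ell,\dots,j_{\ell+k-2}\})$ is blue, and for $\ell=1,\dots,n-k+1$ these $(k-1)$-sets are exactly the edges of the given blue $P^{(k-1)}_{n-1}$. Either way we have found a red $K^{(k)}_s$ or a blue $P^{(k)}_n$, which proves the reduction.

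The step to watch is this last lifting: because the target is an \emph{ordered} path, the reduced coloring must record ``the common color over all later vertices,'' and one must keep careful track of which $(k-1)$-set gets frozen at which stage of the greedy construction, so that monotone paths --- and not merely cliques --- both descend to and lift from the reduced coloring correctly; this is precisely why it is a blue $P_{n-1}$, rather than a blue $P_n$, that one needs at uniformity $k-1$. The only other point requiring care is the tower bookkeeping in the outer induction, namely keeping $c=c(s,k)$ under control through the $k-3$ exponentiations, but this is routine since every intermediate quantity is already superpolynomial, so a polynomial loss merely rescales the argument at the base of the tower by a bounded factor.
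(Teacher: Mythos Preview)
Your proposal is correct and follows exactly the approach the paper indicates: the paper gives no details beyond the sentence ``together with the well-known neighborhood chasing argument of Erd\H os and Rado,'' and you have supplied precisely that argument, together with the appropriate bookkeeping that a blue $P^{(k-1)}_{n-1}$ at the lower uniformity lifts, by appending $v_m$, to a blue $P^{(k)}_n$. Your treatment of the lower bound (deferring to the stepping-up lemma for monotone paths) also matches the paper's discussion following the theorem.
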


In the other direction, we have the trivial inequality $r_k(K_s,P_n) \geq r_k(P_s,P_n)$. The famous cups-caps theorem of Erd\H os and Szekeres \cite{es} states that $r_3(P_s,P_n) = \binom{s + n -4}{s-2} + 1,$ and the stepping-up lemma established in \cite{fpss} (see Theorem 4.3) implies that $r_k(P_s,P_n) \geq \twr_{k-2}(n^c)$, where $c = c(s)$.  Thus, we essentially determine the tower growth rate of $r_k(K_s,P_n)$ for $s$ fixed and $n$ tending to infinity.

For the diagonal case $r_3(K_n, P_n)$, these observations and a result of the authors~\cite{ER} yield 
$$2^{n} < {2n-4 \choose n-2}= r_3(P_n, P_n) \le r_3(K_n, P_n) < r_2(n;n) < 2^{n^2 \log n}.$$
It would be interesting to  improve either  bound for $r_3(K_n, P_n)$.

\subsection{Cliques versus power paths in graphs} 

A key lemma in the first author's~\cite{m} proof of $r_3(K_4,P_n) = O(n^{21})$  is based on the following generalization of monotone paths in ordered graphs.  Given positive integers $t, n$, the \emph{$t$-th power of the path of $P_n$}, denoted by $P_n^t$, is an ordered graph with vertex set $\{1,2,\ldots, n\}$, and $(i,j)$ is an edge if and only if $|j - i| \leq t$.  Hence, $P_n^1 = P_n$.  In \cite{bckk}, Balko et al.~showed that $r_2(P_n^t) = O(n^{129t})$ (see also \cite{m}).  Our next result establishes a near linear bound in the off-diagonal setting.  Moreover, our proof generalizes to the clique versus power-path setting.

\begin{theorem}\label{main1}
For positive integers $s,t,n$ such that $t \leq s$, we have

$$r_2(P^t_s, P^t_n) \leq r_2(K_s,P_n^t) < t^{4s}n(\log n)^{s-2}.$$

\end{theorem}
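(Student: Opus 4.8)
The first inequality is immediate: $P^t_s$ has only $s$ vertices, hence is a subgraph of $K_s$, so a red $K_s$ contains a red $P^t_s$ and therefore $r_2(P^t_s,P^t_n)\le r_2(K_s,P^t_n)$. The plan for the upper bound on $r_2(K_s,P^t_n)$ is an induction on $s$, with a divide‑and‑conquer on $n$ carried out inside the inductive step. The elementary tool I would isolate first is a \emph{gluing observation} for power paths: if $A<C<B$ are three blue $t$‑cliques (all edges inside each of $A,C,B$ are blue), with all edges between $A$ and $C$ blue and all edges between $C$ and $B$ blue, and if moreover $A$ is the set of the last $t$ vertices of a blue $P^t_a$ and $B$ is the set of the first $t$ vertices of a blue $P^t_b$, then concatenating the three pieces yields a blue $P^t_{a+t+b}$. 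This holds because across a block boundary $P^t$ only demands a ``staircase'' of edges, which is dominated by a complete bipartite graph; I would verify this once at the outset.

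For the base case $s=2$, a coloring of $K_N$ with no red $K_2$ is entirely blue, so it contains $P^t_n$ as soon as $N\ge n$, giving $r_2(K_2,P^t_n)\le n\le t^{8}n$. For the inductive step, fix $s\ge 3$, assume the bound for $s-1$ and (within the same $s$) for all smaller values of $n$, and set $\rho:=r_2(K_s,K_t)$. Split the ordered ground set $[N]$ into three consecutive intervals $I_1<I_2<I_3$ with $|I_1|=|I_3|=r_2\big(K_s,P^t_{\lceil n/2\rceil}\big)$ and $|I_2|=2t\cdot r_2(K_{s-1},P^t_n)+\rho$. Assume the coloring has no red $K_s$. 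By the induction on $n$, $I_1$ contains a blue $P^t_{\lceil n/2\rceil}$; let $A$ be its last $t$ vertices. Likewise $I_3$ contains a blue $P^t_{\lceil n/2\rceil}$; let $B$ be its first $t$ vertices. Thus $A\cup B$ has $2t$ vertices and $A<I_2<B$ in the ordering.

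Now comes the crucial dichotomy on $I_2$. Let $W\subseteq I_2$ be the set of vertices joined in blue to every vertex of $A\cup B$. If $|W|\ge\rho$, then since $W$ has no red $K_s$ it contains a blue $K_t$, call it $C$; then $A<C<B$, all $A$–$C$ and $C$–$B$ edges are blue, and the gluing observation produces a blue $P^t_{2\lceil n/2\rceil+t}\supseteq P^t_n$. Otherwise $|I_2\setminus W|>|I_2|-\rho=2t\cdot r_2(K_{s-1},P^t_n)$, and since every vertex of $I_2\setminus W$ is red to some vertex of $A\cup B$, pigeonhole gives a vertex $v\in A\cup B$ with at least $r_2(K_{s-1},P^t_n)$ red neighbours in $I_2$; that red neighbourhood has no red $K_{s-1}$ (else, with $v$, a red $K_s$), so it contains a blue $P^t_n$ by the induction on $s$. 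Either way $K_N$ contains a red $K_s$ or a blue $P^t_n$, which yields the recursion
$$r_2(K_s,P^t_n)\ \le\ 2\,r_2\big(K_s,P^t_{\lceil n/2\rceil}\big)\ +\ 2t\cdot r_2(K_{s-1},P^t_n)\ +\ r_2(K_s,K_t).$$

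Finally I would unwind this recurrence. Iterating the $n\mapsto\lceil n/2\rceil$ step to depth $\approx\log n$ gives a binary tree whose level $k$ has $2^k$ nodes, each adding $2t\cdot r_2(K_{s-1},P^t_{n/2^k})+\rho$; plugging in the inductive bound $r_2(K_{s-1},P^t_m)\le t^{4(s-1)}m(\log m)^{s-3}$, the factors $2^k$ and $2^{-k}$ cancel, so level $k$ contributes $\lesssim t^{4s-3}n(\log n)^{s-3}+2^k\rho$; summing over the $\approx\log n$ levels and adding the $n$ leaves (each of size $r_2(K_s,P^t_1)=1$) gives $r_2(K_s,P^t_n)\lesssim t^{4s-3}n(\log n)^{s-2}$, which is below $t^{4s}n(\log n)^{s-2}$ once $t\ge2$, the remaining small ranges being immediate. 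The main obstacle — and the whole point of the argument — is that two independently found sub‑paths in $I_1$ and $I_3$ need not be compatible at their endpoints; the connector block $C$, produced by the dichotomy (either a blue common neighbourhood large enough to host $C$, or an interface vertex whose red neighbourhood is huge enough to feed back into the induction on $s$), is exactly the device that repairs this. What remains is bookkeeping: choosing the constant $t^{4s}$ so that all additive lower‑order terms (the $\rho$'s, the leaves) are absorbed, and confirming finiteness of the Ramsey numbers and the trivial parameter ranges along the way.
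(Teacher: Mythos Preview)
Your argument is correct and follows essentially the same approach as the paper: both obtain the identical recursion $r_2(K_s,P^t_n)\le 2\,r_2(K_s,P^t_{\lfloor n/2\rfloor})+2t\cdot r_2(K_{s-1},P^t_n)+r_2(K_s,K_t)$ via a blue $K_t$ connector in the middle that glues two blue half-paths. The only cosmetic difference is the order of operations---the paper first locates the blue $K_t$ in a small middle block $U$ and then prunes red neighbours from the two halves, whereas you first find the half-paths in $I_1,I_3$ and then run a dichotomy on $I_2$ to either produce the connector or a large red neighbourhood---but the logic and the resulting bound are the same.
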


For large $s$, e.g., $s = n$, we also have the following bound.

\begin{theorem}\label{main2}
For positive integers $s,t,n$, we have
$$r_2(K_s,P_n^t) < (2s)^{t(t + 1)\log n}.$$

\end{theorem}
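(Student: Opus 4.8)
The plan is to prove the stronger recursive inequality
$$r_2(K_s,P_n^t)\le (2s)^{t(t+1)}\cdot r_2\!\left(K_s,P_{\lceil n/2\rceil}^t\right),$$
and then iterate it $\log n$ times down to the base case $n\le t+1$, where $P_n^t=K_n$ and $r_2(K_s,K_n)\le\binom{s+t-1}{s-1}$ is comfortably below $(2s)^{t(t+1)\log n}$. So everything reduces to one step that halves $n$ at the cost of a factor $(2s)^{t(t+1)}$ in the ground set.

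The structural engine is a concatenation rule for blue power paths. For a blue $t$-clique $Q$ write $N^+(Q)$ for the set of vertices larger than $\max Q$ that are blue-adjacent to every vertex of $Q$. I would first record the observation: if $\Pi$ is a blue $P_a^t$ with terminal $t$-clique $Q$ (its last $t$ vertices), and $\Pi'$ is a blue $P_b^t$ all of whose vertices lie in $N^+(Q)$, then $\Pi$ followed by $\Pi'$ is a blue $P_{a+b}^t$; the only edges to check join the last $t$ vertices of $\Pi$ to the first $t$ vertices of $\Pi'$, and each of the latter lies in the common blue neighbourhood of $Q$, so it sees the required suffix of $Q$. Hence to build a blue $P_n^t$ it suffices to find a blue $P_{\lceil n/2\rceil}^t$ whose terminal clique $Q$ has at least $r_2(K_s,P_{\lceil n/2\rceil}^t)$ common blue neighbours to its right, and then recurse inside that neighbourhood.

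For the step itself, set $R=r_2(K_s,P_{\lceil n/2\rceil}^t)$, let $N=(2s)^{t(t+1)}R$, and split $[N]$ into $L=(2s)^{t(t+1)}$ consecutive blocks $B_1<\dots<B_L$ of size $R$. In each $B_i$ we get a red $K_s$ (done) or a blue $P_{\lceil n/2\rceil}^t$, call it $\Pi_i$, with terminal clique $Q_i$. If for some $i$ the set $N^+(Q_i)\cap(B_{i+1}\cup\dots\cup B_L)$ has size $\ge R$, then inside it we find a red $K_s$ or a blue $P_{\lceil n/2\rceil}^t$, and the concatenation rule turns the latter together with $\Pi_i$ into a blue $P_{2\lceil n/2\rceil}^t\supseteq P_n^t$. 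So we are reduced to the case where for every $i$ the clique $Q_i$ has fewer than $R$ common blue neighbours among the later blocks, i.e. at least $(L-i-1)R$ vertices of $B_{i+1}\cup\dots\cup B_L$ are red-adjacent to some vertex of $Q_i$.

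The hard part will be this last case: turning that red abundance into a red $K_s$ within the budget $L=(2s)^{t(t+1)}$. Pigeonholing a single vertex out of each $Q_i$ and recursing for a red $K_{s-1}$ in its red neighbourhood loses a factor $t$ each time $s$ drops, hence an unaffordable $t^s$ overall once $s$ is large; the whole point of the budget $(2s)^{t(t+1)}=\big((2s)^2\big)^{\binom{t+1}{2}}$ is that all $t$ vertices of each $Q_i$ must be used simultaneously. Concretely, for each pair $i<j$ one records one of the $\binom{t+1}{2}$ "staircase slots" $(a,b)$ with $b\le a$ at which the interface between $Q_i$ and the initial clique of $\Pi_j$ carries a red edge; this $\binom{t+1}{2}$-colouring of the ordered complete graph on $L$ vertices should then be cleaned up one slot at a time — each slot costing one factor $(2s)^2$ — ending either with a pair whose full interface is blue (giving a blue $P_n^t$ after concatenation) or with the surviving red edges at a single slot pinned to common vertices, forcing a red $K_s$. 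Arranging that this extraction genuinely outputs a clique rather than a bipartite-type red structure, and keeping the cost at $(2s)^2$ per slot, is the crux; I would expect to spend most of the work there, possibly by carrying, through the $n$-recursion, a little extra information about the initial and terminal cliques of the half-paths so that these interface colourings interact cleanly across levels.
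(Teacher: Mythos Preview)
Your recursion and the concatenation lemma are fine, and so is the reduction to the ``red case''; but that case is a genuine gap and your sketch does not close it. The difficulty you flag yourself is fatal for this route: after pigeonholing many pairs $(i,j)$ to a single staircase slot $(a,b)$, the red edges you obtain join the $a$-th vertex of $Q_i$ to the $b$-th vertex of the initial clique of $\Pi_j$, and these endpoints vary with $i$ and $j$. What you get is a red ``shift'' structure --- $u_iv_j$ red for all $i<j$ in some index set, with $u_i\ne v_i$ in general --- not a red clique, and nothing forces the $u$'s and $v$'s to coincide. Passing to the red neighbourhood of a single $u_i$ and recursing for a red $K_{s-1}$ costs a factor of $r_2(K_{s-1},P_n^t)$, which depends on $n$ and blows the budget; your proposed $(2s)^2$ per slot presupposes that one can simultaneously fix the slot \emph{and} pin both endpoints within a factor independent of $n$, and I do not see how to do that. ``Carrying a little extra information through the recursion'' is not yet a plan.

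The paper sidesteps the red case entirely by a supersaturation argument. Since $r_2(K_s,K_{t+1})\le s^t$, every $s^t$-subset of $[N]$ with no red $K_s$ contains a blue $K_{t+1}$, so there are at least $N^{t+1}/(2s^t)^{t+1}$ blue copies of $K_{t+1}$ in $[N]$. Pigeonholing on the middle $t-1$ vertices of these copies yields a single blue clique $Y$ together with sets $V_1<Y<V_2$, every vertex of which is blue to all of $Y$, satisfying $|V_1|\,|V_2|\ge N^2/(2s)^{t(t+1)}$ and hence $\min(|V_1|,|V_2|)\ge N/(2s)^{t(t+1)}$. One then recurses inside $V_1$ and inside $V_2$ for blue half-paths and glues them through $Y$; the loss per halving of $n$ is exactly $(2s)^{t(t+1)}$, and there is no adversarial case to analyse.
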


\noindent Hence in the diagonal setting, for fixed $t > 0$, we have $r_2(K_n,P_n^t) \leq 2^{O(\log^2n)}.$  This coincides with a more general result established by Conlon, Fox, Lee, and Sudakov \cite{cfls} on ordered graphs with bounded degeneracy.   In the off-diagonal case, we make the following stronger conjecture.

\begin{conjecture}
    For all $s,t>1$ there exists $c=c_{s,t}$ such that
    $r_2(K_s, P^t_n) < c\, n$.
\end{conjecture}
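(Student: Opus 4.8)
The idea is to pass to a block reformulation of $P^t_n$ and induct on $s$. Say that a sequence $B_1,\dots,B_m$ of pairwise disjoint $t$-element sets with $\max B_i<\min B_{i+1}$ for all $i<m$ is a \emph{blue block sequence} if each $B_i$ spans a blue clique and all $t^2$ edges between $B_i$ and $B_{i+1}$ are blue. A blue block sequence of length $m=\lceil n/t\rceil$ contains a blue $P^t_n$: listing the $mt$ vertices in increasing order, every set of $t+1$ consecutive vertices lies in $B_i\cup B_{i+1}$ for some $i$ (a block of $t+1$ consecutive positions meets at most two consecutive length-$t$ blocks, since it can enter a new block at most once), hence spans only blue edges, which is exactly the condition defining a blue $P^t_{mt}\supseteq P^t_n$. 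So it suffices to find a blue block sequence of length $\lceil n/t\rceil$ in every red-$K_s$-free $2$-colouring of $[C_{s,t}n]$, for an appropriate constant $C_{s,t}$. The base case $s=2$ is immediate: a red-$K_2$-free colouring has no red edge, so $[n]$ itself is a blue $P^t_n$ and $r_2(K_2,P^t_n)=n$.

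For the inductive step I would take a red-$K_s$-free colouring of $[N]$ with $N=C_{s,t}n$ and branch on the maximum red degree $\Delta$. If $\Delta\ge C_{s-1,t}n$, then a vertex $v$ of red degree $\Delta$ has a red neighbourhood $N_R(v)$ that is red-$K_{s-1}$-free and of size at least $C_{s-1,t}n\ge r_2(K_{s-1},P^t_n)$, so the inductive hypothesis produces a blue $P^t_n$ inside $N_R(v)$. If $\Delta<C_{s-1,t}n$, then every vertex is blue-joined to all but fewer than $C_{s-1,t}n$ of the others, so choosing $C_{s,t}$ much larger than $C_{s-1,t}$ makes the blue graph ``robustly dense''; in particular fewer than $t\Delta$ vertices fail to be blue-joined to any fixed $t$-set. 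Moreover red-$K_s$-freeness forces the blue graph to have independence number at most $s-1$, so any window of $r_2(K_s,K_t)$ consecutive vertices already spans a blue $K_t$. Partition $[N]$ into consecutive intervals of a constant length $L\ge r_2(K_s,K_t)$; each interval then contains a blue $K_t$, and all but a tiny constant fraction of its vertices are blue-joined to any prescribed $t$-set.

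The decisive step is the \emph{gluing}: choosing $\lceil n/t\rceil$ of these intervals together with one blue $K_t$ inside each, so that consecutive chosen blocks are completely blue-joined. This is precisely the point at which the proof of Theorem~\ref{main1} pays a factor of $\log n$ for each increment of $s$. The trouble is that the collection of intervals that are useless relative to the current tail $B$ --- those too heavily polluted by the fewer than $t\Delta$ vertices red-adjacent to $B$ --- is regenerated whenever the tail is updated, so a greedy left-to-right construction can be forced to discard a superlinear total number of intervals. When $t=1$ this is circumvented by the Dilworth-type monovariant $\varphi(v)=$ the length of a longest blue monotone path ending at $v$, whose level sets are red cliques of size $<s$; but the naive lift of $\varphi$ to the final $t$-tuple of a block sequence fails, because that tuple \emph{shifts} when the sequence is extended, so no contradiction appears at a repeated level. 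I expect the real content to be one of: (i) a subtler monovariant that is invariant under the tail-shift yet still forces red cliques; (ii) a strengthening of the inductive hypothesis delivering a \emph{robust} conclusion --- for instance $\Theta(1)$ pairwise vertex-disjoint blue copies of $P^t_n$, or a copy whose terminal $t$-tuple may be prescribed anywhere inside a linear-sized set --- so that the recursive outputs can be stitched together with only an $O(1)$ loss; or (iii) a randomized choice of the retained intervals keeping the expected number of incompatibilities linear. Pinning down the correct form of (ii) appears to be the main obstacle, and is essentially why the statement remains only a conjecture; a sensible first milestone is $s=3$, where the red graph is triangle-free and every red neighbourhood is itself a blue clique, and even there the best bound I am aware of is the $r_2(K_3,P^t_n)=O(t^{12}n\log n)$ coming from Theorem~\ref{main1}.
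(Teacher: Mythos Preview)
This statement is posed in the paper as an open \emph{conjecture}; the paper gives no proof, and you are right not to claim one. The parts of your sketch that you do assert as proved are correct: the block-sequence reformulation of $P^t_n$ is valid (any $t+1$ consecutive vertices of $B_1\cup\cdots\cup B_m$ lie in some $B_i\cup B_{i+1}$), the base case $s=2$ is trivial, and the high-red-degree branch of your induction closes cleanly via the inductive hypothesis applied inside a red neighbourhood. Your diagnosis of the real obstacle---the gluing step in the low-red-degree branch---and of why the $t=1$ monovariant does not lift is accurate.

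One structural remark. The paper's own route to the weaker Theorem~\ref{main1} is \emph{not} a max-degree branching: it places a short interval $U$ in the middle of $[N]$, finds a blue $K_t$ inside $U$ by Ramsey, deletes the at most $t\cdot r_2(K_{s-1},P^t_n)$ red neighbours of that clique from the two halves $V_1,V_2$, and recurses on each half for a blue $P^t_{\lfloor n/2\rfloor}$, gluing through the central blue $K_t$. This gives the recursion
\[
r_2(K_s,P^t_n)\le 2\,r_2\!\left(K_s,P^t_{\lfloor n/2\rfloor}\right)+O_t\!\left(r_2(K_{s-1},P^t_n)\right),
\]
and the $(\log n)^{s-2}$ is exactly the $\log n$ levels of halving compounded over $s$. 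Getting linear from this viewpoint means making the inhomogeneous term contribute $O(n)$ \emph{in total over the recursion tree} rather than $O(n)$ per level---a concrete reformulation of your option~(ii). Your branching on $\Delta$ is a legitimate alternative framework, but be aware that even for $t=1$ the linear bound $(s-1)(n-1)+1$ is not recovered by a density argument in the low-degree branch; it comes entirely from the Dilworth monovariant. So the branching may be scaffolding rather than the mechanism, and options (i) or (ii) are likely where the content lies.
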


\section{Non-increasing sets: Proof of Theorem \ref{quasipoly}}

In this section, we prove Theorem \ref{quasipoly} by establishing a Ramsey-type result for non-increasing sets.  Let $\chi$ be a $q$-coloring of the pairs of $[N]$, with colors $\{\kappa_1,\ldots, \kappa_q\} \subset \mathbb{Z}$ such that $\kappa_1 < \cdots < \kappa_q$.  Then we say that a triple $u, v,w \in [N]$, where $u < v < w$, is \emph{non-increasing} if

\begin{enumerate}
    \item $\chi(u,v)  = \chi(u,w) \geq \chi(v,w)$, or

    \item $\chi(u,v) \geq \chi(v,w) = \chi(u,w)$.
\end{enumerate}

We say that a set $S\subset [N]$ is \emph{non-increasing} with respect to $\chi$ if every triple in $S$ is non-increasing.  Given subsets $S,T \subset [N]$ such that $S = \{v_1,\ldots, v_s\}$ and $T = \{u_1,\ldots, u_s\}$, we say that $S$ and $T$ have the same \emph{color pattern} with respect to $\chi$ if $\chi(v_i,v_j) = \chi(u_i,u_j)$ for all $i,j$.

We will need the following lemma about non-increasing sets.



\begin{lemma}\label{order}
Let $S = \{v_1,\ldots, v_s\}$ be a non-increasing set with respect to $\chi$, where $v_1 < \cdots <  v_s$.  Fix vertex $v_j \in S$. 
  Then for any $v_i,v_{\ell} \in S$ such that $v_i < v_j < v_{\ell}$, we have

  \begin{enumerate}
      \item $\chi(v_{i},v_j) \geq \chi(v_j,v_{\ell})$, and

      \item  $\chi(v_{j-1},v_j) \leq \chi(v_i,v_j)$, and  

      \item $\chi(v_{j},v_{j + 1}) \geq \chi(v_j,v_{\ell})$.
  \end{enumerate}

\end{lemma}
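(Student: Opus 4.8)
The plan is to unwind the definition of ``non-increasing'' applied to the three-element subsets of $S$ that contain the fixed vertex $v_j$, and to argue by induction along the chain of indices. For part (1), given $v_i < v_j < v_\ell$, the triple $\{v_i, v_j, v_\ell\}$ is non-increasing, so either $\chi(v_i,v_j) = \chi(v_i,v_\ell) \ge \chi(v_j,v_\ell)$ or $\chi(v_i,v_j) \ge \chi(v_j,v_\ell) = \chi(v_i,v_\ell)$; in both cases $\chi(v_i,v_j) \ge \chi(v_j,v_\ell)$, which is exactly what is claimed. So part (1) is essentially immediate from the definition.

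For part (2), I want to show $\chi(v_{j-1},v_j) \le \chi(v_i,v_j)$ for every $v_i < v_j$. If $i = j-1$ this is trivial, so assume $i < j-1$, i.e. $v_i < v_{j-1} < v_j$. Apply part (1) (or directly the non-increasing condition) to the triple $\{v_i, v_{j-1}, v_j\}$: since $v_i < v_{j-1} < v_j$, we get $\chi(v_i, v_{j-1}) \ge \chi(v_{j-1}, v_j)$, but that bounds $\chi(v_{j-1},v_j)$ from above by $\chi(v_i,v_{j-1})$, not by $\chi(v_i,v_j)$. The right move is instead to use the non-increasing condition on $\{v_i, v_{j-1}, v_j\}$ to control $\chi(v_i,v_j)$: in case (1) we have $\chi(v_i,v_{j-1}) = \chi(v_i,v_j) \ge \chi(v_{j-1},v_j)$, giving $\chi(v_i,v_j) \ge \chi(v_{j-1},v_j)$ directly; in case (2) we have $\chi(v_i,v_{j-1}) \ge \chi(v_{j-1},v_j) = \chi(v_i,v_j)$, which again gives $\chi(v_i,v_j) \ge \chi(v_{j-1},v_j)$. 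So part (2) also follows by a single application of the definition to the triple $\{v_i, v_{j-1}, v_j\}$, reading off $\chi(v_i,v_j)$ as the ``$u$--$w$'' value.

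Part (3) is the mirror image: for $v_j < v_\ell$, if $\ell = j+1$ it is trivial, and otherwise $v_j < v_{j+1} < v_\ell$, so apply the non-increasing condition to $\{v_j, v_{j+1}, v_\ell\}$. In case (1), $\chi(v_j,v_{j+1}) = \chi(v_j,v_\ell) \ge \chi(v_{j+1},v_\ell)$, so $\chi(v_j,v_{j+1}) \ge \chi(v_j,v_\ell)$; in case (2), $\chi(v_j,v_{j+1}) \ge \chi(v_{j+1},v_\ell) = \chi(v_j,v_\ell)$, so again $\chi(v_j,v_{j+1}) \ge \chi(v_j,v_\ell)$. In every case we read off the bound by identifying $\chi(v_j,v_\ell)$ as the ``$u$--$w$'' value of the triple. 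Thus all three parts reduce to a careful case analysis of the two clauses in the definition of a non-increasing triple, and there is no real obstacle; the only thing to be careful about is matching up, in each of parts (2) and (3), which of the three pairwise colors plays the role of $\chi(u,v)$, $\chi(v,w)$, $\chi(u,w)$ so that the desired inequality drops out. No induction or iteration over longer sub-chains is actually needed — one triple suffices for each part.
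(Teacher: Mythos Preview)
Your proof is correct and follows essentially the same approach as the paper: both arguments reduce each part to the non-increasing condition on the single relevant triple ($\{v_i,v_j,v_\ell\}$ for (1), $\{v_i,v_{j-1},v_j\}$ for (2), $\{v_j,v_{j+1},v_\ell\}$ for (3)). The only cosmetic difference is that the paper phrases (2) as a contradiction while you do a direct two-case analysis; the content is identical.
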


\begin{proof}
The first property follows from the fact that $S$ is non-increasing.  For the second property, for sake of contradiction, suppose there is a vertex $v_i < v_{j-1}$ such that $\chi(v_{j-1},v_j) > \chi(v_i,v_j)$. Then we must have $\chi(v_i,v_{j-1}) = \chi(v_i,v_{j})$, contradicting the fact that $\{v_i,v_{j-1},v_j\}$ is non-increasing.  A similar argument shows that the third property follows.
\end{proof}

\noindent Let $f(s;q)$ be the minimum integer $N$, such that if the pairs of $[N]$ are colored with at most $q$ colors $\kappa_1 < \cdots < \kappa_q$, then there is a set $S\subset [N]$ of size $s$ such that every triple in $S$ is non-increasing.

\begin{theorem}\label{reduce}

We have $r_3(K_s,P_n) \leq f(s;n-2)$.

\end{theorem}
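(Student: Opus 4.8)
The plan is to show that if $N \geq f(s; n-2)$, then any red/blue coloring of the triples of $[N]$ yields a red $K_s$ or a blue $P_n$. So fix such a coloring $c: \binom{[N]}{3} \to \{\text{red}, \text{blue}\}$. The first step is to define an auxiliary coloring $\chi$ of the \emph{pairs} of $[N]$ using at most $n-2$ colors, so that we can invoke the definition of $f(s; n-2)$. The natural candidate, following the classical Erd\H os--Szekeres / Dilworth approach, is to set, for each pair $u < v$,
$$\chi(u,v) = \text{length of the longest blue monotone path in } [N] \text{ ending at the edge } (u,v),$$
i.e.\ the largest $m$ for which there exist $w_1 < w_2 < \cdots < w_{m-2} < u < v$ with every consecutive triple $(w_i, w_{i+1}, w_{i+2})$, as well as $(w_{m-3}, w_{m-2}, u)$ and $(w_{m-2}, u, v)$, colored blue. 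If this quantity ever reaches $n-2$ (meaning a blue monotone path on $n-1$ vertices ending in $(u,v)$, hence a blue $P_n$ once we... actually one must be careful with the exact bookkeeping of path length versus vertex count), we are done; so assume $\chi$ takes values in $\{0, 1, \ldots, n-3\}$ or an appropriate range of size $n-2$, viewed as an ordered set of colors $\kappa_1 < \cdots < \kappa_{n-2}$.

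Since $N \geq f(s; n-2)$, the definition of $f$ gives a set $S \subseteq [N]$ with $|S| = s$ that is non-increasing with respect to $\chi$; write $S = \{v_1 < v_2 < \cdots < v_s\}$. The key step is to prove that $S$ spans a red $K_s$, i.e.\ that \emph{every} triple $v_i < v_j < v_\ell$ in $S$ is red. Suppose not: some triple $(v_i, v_j, v_\ell)$ is blue. The idea is to derive a contradiction with the definition of $\chi$ by building a blue monotone path through $v_i, v_j, v_\ell$ that is too long. Here Lemma \ref{order} is exactly the tool: being non-increasing forces the $\chi$-values along $S$ to behave monotonically in the way needed to concatenate a longest blue path ending at $(v_h, v_i)$ (for the vertex $v_h$ witnessing $\chi(v_i, v_j)$, or rather the path ending at $(v_i, v_j)$ of length $\chi(v_i,v_j)$) with the blue triple $(v_i, v_j, v_\ell)$ to get a strictly longer blue path ending at $(v_j, v_\ell)$ — contradicting the maximality that defines $\chi(v_j, v_\ell)$, provided $\chi(v_i,v_j) \geq \chi(v_j, v_\ell)$, which is precisely Lemma \ref{order}(1). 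One needs the non-increasing condition (specifically that $\chi(v_i,v_j) = \chi(v_i, v_\ell)$ or $\chi(v_i,v_\ell) = \chi(v_j,v_\ell)$) to guarantee that the path witnessing $\chi(v_i,v_j)$ can actually be extended by the edge $(v_j, v_\ell)$ without the repeated-value obstruction, so the two cases in the definition of ``non-increasing triple'' correspond to the two ways the extension can be carried out; parts (2) and (3) of Lemma \ref{order} handle the edge cases where $v_i = v_{j-1}$ or $v_\ell = v_{j+1}$.

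The main obstacle I anticipate is getting the bookkeeping exactly right: defining $\chi$ so that (a) it genuinely uses only $n-2$ colors unless a blue $P_n$ already appears, and (b) the monotonicity supplied by Lemma \ref{order} is precisely what licenses the path-extension argument. In particular, one must check that when $\chi(v_i, v_j) \geq \chi(v_j, v_\ell)$ and the triple $(v_i, v_j, v_\ell)$ is blue, the longest blue path ending at $(v_i, v_j)$ does not already use $v_j$ or $v_\ell$ and really can be prolonged — this is where the definition of non-increasing (the equality $\chi(v_i,v_j) = \chi(v_i,v_\ell)$ in case 1, or $\chi(v_j,v_\ell) = \chi(v_i,v_\ell)$ in case 2) is used to rule out a ``stuck'' configuration. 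Once that extension lemma is in place, the contradiction is immediate and $S$ is a red $K_s$, completing the proof.
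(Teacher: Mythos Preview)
Your approach is correct and matches the paper's proof: define $\chi(u,v)$ as the number of vertices in the longest blue monotone path ending at the edge $(u,v)$ (so $\chi$ takes values in $\{2,\ldots,n-1\}$ whenever there is no blue $P_n$, giving exactly $n-2$ colors), find a non-increasing set $S$ of size $s$, and note that a blue triple $(v_i,v_j,v_\ell)$ would force $\chi(v_j,v_\ell)\geq \chi(v_i,v_j)+1$, contradicting the non-increasing inequality $\chi(v_i,v_j)\geq \chi(v_j,v_\ell)$. Your worries about bookkeeping and ``stuck configurations'' are unnecessary: since every vertex of the path witnessing $\chi(v_i,v_j)$ is at most $v_j<v_\ell$, appending $v_\ell$ is always legal, and only the single inequality common to both cases of the non-increasing definition (i.e.\ Lemma~\ref{order}(1)) is needed---parts (2) and (3) of that lemma play no role here.
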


\begin{proof}

 Let $N = f(s;n)$ and let $\phi$ be a red-blue coloring of the triples of $[N]$.  If $\phi$ produces a blue monotone path of size $n$, then we are done.  Otherwise, we define $\chi:\binom{[N]}{2}\rightarrow \{2,3,\ldots, n-1\}$ such that for $u,v \in [N]$, $\chi(u,v)$ is the size of the longest blue monotone path ending at $(u,v)$ with respect to $\phi$.  By definition of $f(s;n)$, there is a set $S\subset [N]$ of $s$ vertices such that every triple in $S$ is non-increasing with respect to $\chi$.  Hence, $\phi$ must color every triple in $S$ red, which yields a red $K_s$ with respect $\phi$.\end{proof}

We now prove the following upper bound for $f(s;n)$.  Together with Theorem \ref{reduce}, Theorem \ref{quasipoly} quickly follows.

\begin{theorem}
    For $s\geq 3$ and $n\geq 2$, we have $f(s;n) \leq 2^{5^ss!(\log n)^{s-2}}$.
\end{theorem}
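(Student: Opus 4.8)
The plan is to prove the bound $f(s;n) \leq 2^{5^s s! (\log n)^{s-2}}$ by induction on $s$. The base case $s=3$ should be easy: any coloring of the pairs of $[N]$ with $n$ colors and $N$ large enough must contain a non-increasing triple, and in fact a short computation (even $N = n+1$ or so, certainly $N \le 2^{5^3 3!} = 2^{750}$ with room to spare) suffices, since among any $n+1$ pairs sharing a common left endpoint two must receive the same color, which together with the pair between their right endpoints already forces (after a little case analysis) a non-increasing triple. So the real content is the induction step.

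For the induction step, set $N = 2^{5^s s! (\log n)^{s-2}}$ and let $\chi$ be an $n$-coloring of $\binom{[N]}{2}$ with colors $\kappa_1 < \cdots < \kappa_n$. The key idea, exploiting Lemma \ref{order}, is this: fix a vertex $v$ and look at the colors $\chi(v,w)$ as $w$ ranges over vertices to the right of $v$; within a non-increasing set, these colors are forced to be monotone in a controlled way (part 3 of Lemma \ref{order} says the color to the immediate right dominates all colors further right), and likewise colors $\chi(u,v)$ from the left are controlled by part 2. The strategy is to build the non-increasing set greedily from left to right, maintaining at each step a large "candidate" set of vertices all of whose pairwise colors with the already-chosen vertices behave consistently; each time we add a vertex we pay a factor governed by how the $n$ available colors can be "split" between the left-part and right-part of that vertex. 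Because Lemma \ref{order} forces the relevant color sequences to be non-increasing, the number of distinct "color thresholds" we ever need to track telescopes, and over the course of building $s$ vertices we only branch on the color roughly $s$ times on a set of size $n$, giving the $(\log n)^{s-2}$ in the exponent rather than $(\log n)^{s-1}$ or worse.

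More concretely, I would argue as follows. Partition the colors into two blocks at a threshold $\kappa_m$ chosen so that roughly half the "action" lies on each side — or rather, use a dyadic/recursive split on the color set of size $n$. On a set of $N$ vertices, by pigeonhole restrict to a subset of size about $N / 2^{O(\text{poly}(s))}$ on which the first chosen vertex $v_1$ sees, to its right, only colors in a sub-block of size $\le n/2$ (this is where one power of $\log n$ is consumed, since halving the color count $\log n$ times exhausts it, but the induction on $s$ only invokes this halving a bounded number of times per level). Then recurse: the remaining vertices, with $v_1$ removed, must form a non-increasing set relative to the restricted coloring, and crucially the number of colors that can appear "freshly" has effectively dropped, so we may apply the induction hypothesis for $s-1$ with a color count that is a constant-power-of-two fraction of $n$, i.e. $\log$ of it is $\log n - O(1)$. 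Tracking the constants: each level of the $s$-induction multiplies the exponent's constant by about $5 \cdot s$ (one factor of $5$ for the pigeonhole losses, one factor of $s$ from the number of already-chosen vertices we must coordinate with), yielding the $5^s s!$ coefficient, and each level costs one factor of $(\log n)$, yielding the $(\log n)^{s-2}$ since we start the recursion at $s=3$ with no $\log n$ factor.

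The main obstacle I anticipate is the bookkeeping in the induction step that shows the color count genuinely decreases (or that the color sequences are short enough to enumerate) when passing from $s$ to $s-1$: Lemma \ref{order} gives the monotonicity that makes this possible, but one must set up the candidate-set pigeonholing so that the colors $\chi(v_1, \cdot)$ to the right, the colors $\chi(\cdot, v_1)$ to the left, and the interaction with future vertices are all simultaneously controlled, without the losses compounding multiplicatively in $n$. Getting the right ``half the colors'' split — so that each application of the induction hypothesis sees at most $n/2$ (or $n^{1/2}$, or similar) colors and the recursion in $s$ terminates after producing the $(\log n)^{s-2}$ factor and not more — is the delicate quantitative heart of the argument; the rest is careful but routine pigeonhole.
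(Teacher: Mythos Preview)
Your proposal has genuine gaps in both the base case and the induction step. For $s=3$ you have misread the target: $(\log n)^{s-2}=\log n$, so the bound to prove is $f(3;n)\le n^{750}$, not the constant $2^{750}$; and your claim that $N\approx n+1$ suffices is simply false, since the colouring $\chi(i,j)=i$ on $[n+1]$ uses $n$ colours and every triple $u<v<w$ has $\chi(u,v)=\chi(u,w)=u<v=\chi(v,w)$, so no triple is non-increasing. The paper handles $s=3$ by its own induction on $n$: pass to the edges whose colour lies in the bottom half, find a vertex $v$ whose left-neighbourhood in these edges has size at least $(N-1)/4$, and observe that either some pair in that neighbourhood has a top-half colour (giving a non-increasing triple immediately) or the neighbourhood uses only $n/2$ colours and one recurses.

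The more serious gap is the induction step. You propose a single induction on $s$ --- pick $v_1$, restrict colours, recurse to $s-1$ --- but invoking the $s{-}1$ hypothesis gives an exponent of order $(\log n)^{s-3}$, and you never supply a mechanism that manufactures the missing factor of $\log n$; your remarks about ``halving the colour count $\log n$ times'' yet doing so ``a bounded number of times per level'' are mutually inconsistent. The paper's argument is structurally different: it is a \emph{double} induction on $s$ and $n$. The quantity $f(s-1;n)$ enters not as a recursive target but only as a cost in a supersaturation step, which produces at least $N^{s-1}/(2n^{s^2}f(s-1;n)^{s-1})$ non-increasing $(s-1)$-sets all sharing the same colour pattern $\kappa_1\ge\cdots\ge\kappa_{s-2}$. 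One then locates an index $i$ with $\kappa_{i-1}-\kappa_i\ge n/s$ and, fixing the other $s-2$ vertices, obtains a large set $B$ of candidates for position $i$. Either some pair in $B$ has colour in the interval $[\kappa_i,\kappa_{i-1}]$, in which case inserting both into position $i$ yields a non-increasing $s$-set (this is the only place Lemma~\ref{order} is invoked), or else $B$ sees at most $(1-1/s)n$ colours and one applies the induction hypothesis for the \emph{same} $s$ with $n$ replaced by $(1-1/s)n$. It is this recursion in $n$, iterated $O(s\log n)$ times, that produces the extra $\log n$ in the exponent; your greedy left-to-right scheme has no analogue of this step.
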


\begin{proof}
    We proceed by double induction on $s$ and $n$.  For the base case $n = 2$ and $s\geq 3$, we have

    $$f(s;2) \leq r_2(K_s) < 4^s < 2^{5^ss!}.$$

Therefore, let us assume that the statement holds for $n' < n$.  For the other base case $s = 3$ and $n \geq 2$, let $N = 2^{5^3\cdot 6\log n}$ and $\chi$ be an $n$-coloring of the pairs (edges) of $[N]$ with colors $\{1,\ldots, n\}$.  We can assume at least half of the edges have color $i \leq n/2$, since otherwise a symmetric argument would follow.  Let $E\subset \binom{[N]}{2}$ be the set of edges with color at most $n/2$, and for $v \in [N]$, let 

$$N^-_E(v) = \{u\in [N]: u < v,(u,v) \in E\},$$

\noindent and $d^-_E(v) = |N^-_E(v)|$. Hence, $\sum_v d^-_E(v) = |E|   \geq (1/2)\binom{N}{2}.$

By averaging, there is a vertex $v \in[N]$ such that $d_E^-(v) \geq (N-1)/4.$  If there is a pair in $N^-_E(v)$ with color $j > n/2$, then we have a non-increasing triple and we are done.  On the other hand, if no such pair has color $j > n/2$, since we have

$$|N^-_E(v)| \geq \frac{N-1}{4}  > 2^{5^3\cdot 6\log (n/2)},$$

\noindent we can apply induction to find a non-increasing triple and we are done.  

For the inductive step, let us assume that the statement holds for $s' < s$ and $n' < n$.  Let $N = 2^{5^ss!(\log n)^{s-2}}$.  Let $\chi$ be an $n$-coloring of the pairs of $[N]$ with colors $\{1,\ldots, n\}$.  By a standard supersaturation argument, we have at least

$$\frac{\binom{N}{f(s-1;n)}}{\binom{N - (s -1)}{f(s-1;n) - (s-1)}} \geq \frac{(N - s)^{s - 1}}{f(s-1;n)^{s-1}} \geq  \frac{N^{s - 1}}{2f(s-1;n)^{s-1}},$$

\noindent copies of a non-increasing set on $s-1$ vertices.  By the pigeonhole principle, there are at least $N^{s-1}/(2n^{s^2}f(s-1;n)^{s-1})$ non-increasing sets on $s-1$ vertices with the same color pattern.  Let us fix one such non-increasing set $S = \{v_1,\ldots, v_{s-1}\}$ for reference, and let $\chi(v_i,v_{i + 1}) = \kappa_i$.  
For convenience, set $\kappa_0 = n$ and $\kappa_{s-1} = 1$, which implies $$n = \kappa_0 \geq  \kappa_1  \geq \cdots \geq \kappa_{s-2} \geq \kappa_{s-1} = 1.$$    By the pigeonhole principle, there is an $i$ such that $1 \leq i \leq s-1$ such that $\kappa_{i-1} - \kappa_i \geq n/s$.  Since we have $N^{s-1}/(2n^{s^2}f(s-1;n)^{s-1})$ non-increasing sets on $s-1$ vertices with the same color pattern as $S$, there is a subset $B\subset [N]$ and $s-2$ vertices $u_1,\ldots,u_{i-1},u_{i + 1},\ldots, u_{s-1} \in [N]$ such that for each $b \in B$, we have

\begin{enumerate}
    \item $u_1 < \cdots < u_{i-1} < b < u_{i + 1} < \cdots < u_{s-1}$,

    \item $|B| \geq N/(2n^{s^2}f(s-1;n)^{s-1})$, and

    \item $S' = \{u_1,\ldots,u_{i - 1},b,u_{i + 1},\ldots,  u_{s-1}\}$ is non-increasing with the same color pattern as $S$.  
    
\end{enumerate} 

\noindent Let us remark that if $i = 1$, then we have $b < u_2 < \cdots < u_{s-1}$ for all $b \in B$, and $S' = \{b,u_2,\ldots, u_{s-1}\}$.  Likewise, if $i = s-1$, then we have $u_1 < \cdots < u_{s-2} < b$ for all $b \in B$, and $S' = \{u_1,\ldots, u_{s-2},b\}$.

If there is a pair $b,b' \in B$ such that $\kappa_{i-1} \geq \chi(b,b') \geq \kappa_i$, then the set $$T =  \{u_1,\ldots,u_{i - 1},b,b',u_{i + 1},\ldots,  u_{s-1}\}$$ is a nonincreasing set of size $s$. 
 Indeed, it suffices to check that triples of the form $\{u_j,b,b'\}$ for $j \leq i-1$, and $\{b,b',u_j\}$ where $j\geq i + 1$, are non-increasing.  Assume $j\leq i-1$.  By construction, we have $\chi(u_j,b) = \chi(u_j,b')$.   By Lemma \ref{order} and the assumption above, we have $$\chi(u_j,b) = \chi(u_j,b')\geq \kappa_{i-1}\geq\chi(b,b').$$  Hence, $\{u_j,b,b'\}$ is non-increasing.  For $j\geq i + 1$, a similar argument shows that $\{b,b',u_j\}$ is non-increasing.

Therefore, we can assume that $\chi$ uses at most $n - n/s = n(s-1)/s$ distinct colors on $B$.  However, this implies

\begin{eqnarray*}
|B|  & \geq &  \frac{N}{2n^{s^2}f(s-1;n)^{s-1}}\\\\
 & \geq & \frac{2^{5^ss!(\log n)^{s-2}}}{2n^{s^2}2^{(s-1)5^{s-1}(s-1)!(\log n)^{s-3}}} \\\\
&\geq &  2^{5^ss!(\log n)^{s-2} - 2(s-1)5^{s-1}(s-1)!(\log n)^{s-3}}\\\\
 & \geq & 2^{5^ss!(\log n - \log(s/(s-1)))^{s-2}} \\\\
& \geq & 2^{5^ss!(\log((s-1)n/s))^{s-2}}  \\\\
 & \geq & f(s;(s-1)n/s).
\end{eqnarray*}

\noindent By the induction hypothesis, we can find a non-increasing set inside of $B$.

\end{proof}

\section{Ordered graphs}

\begin{proof}[Proof of Theorem \ref{main1}]

We proceed by double induction on $s$ and $n$.  The base cases when $s = 2$ or when $n = 2$ is trivial.  For the inductive step, assume that the statement holds for $s' < s$ or $n' < n$.  Let $N = 8s^tn(\log n)^{s-2}$, and $V = [N]$. For sake of contradiction, suppose there is $\chi:\binom{[N]}{2}\rightarrow \{\textnormal{red,blue}\}$, such that $\chi$ does not produce a red $K_s$ nor a blue $P_n^t$.  Then we define

\begin{itemize}
    \item $U = \{\lfloor N/2\rfloor + 1, \lfloor N/2\rfloor + 2, \ldots, \lfloor N/2\rfloor  + \binom{s + t}{t}\}$, 
    
    \item $V_1 = \{1,2,\ldots, \lfloor N/2\rfloor\}$, 
    
    \item $V_2 = \{\lfloor N/2\rfloor  + \binom{s + t}{t} + 1, \lfloor N/2\rfloor  + \binom{s + t}{t} + 2,\ldots, N\}$
\end{itemize}

By Ramsey's theorem, we know that $r_2(K_s,K_t) < \binom{s + t}{t}$.  Hence, since $|U|  =  \binom{s + t}{t}$, we can conclude that $U$ contains a blue $K_t$ on vertices $u_1,\ldots, u_t \in U$.  For $u_i \in U$, let

$$N_r(u_i) = \{v \in V: \chi(u_i,v) = \textnormal{red}\}.$$

\noindent Then we have $|N_r(u_i)| < r_2(K_{s-1},P_n^t).$  Let

$$V_1' = V_1 \setminus(N_r(u_1)\cup\cdots \cup N_r(u_t)),$$

$$V_2' = V_2 \setminus(N_r(u_1)\cup\cdots \cup N_r(u_t)).$$

Then notice that we must have either $|V_1'| < r_2(K_s,P_{\lfloor n/2\rfloor}^t)$ or  $|V_2'| < r(K_s,P_{\lfloor n/2\rfloor}^t)$.  Indeed, otherwise both $V_1'$ and $V_2'$ contain a blue $P_{\lfloor n/2\rfloor}^t$. Since $\chi$ colors all edges between $u_i$ and $V'_1\cup V_2'$ blue, we can combine both blue copies of $P_{\lfloor n/2\rfloor}^t$ with vertices $u_1,\ldots, u_t$ and obtain a blue $P_{2\lfloor n/2\rfloor +t}$, which contains a copy of a blue $P_n^t$ since $2\lfloor n/2\rfloor + t > n$.    

Therefore, without loss of generality, we can assume that $|V'_1| < r_2(K_s,P_{\lfloor n/2\rfloor}^t)$.  On the other hand, we have

$$|V'_1| \geq \lfloor N/2\rfloor  - \binom{s + t}{t} - t\cdot r_2(K_{s-1},P_n^t).$$

\noindent Hence

$$ N \leq 2 r_2(K_s,P_{\lfloor n/2\rfloor}^t) + 2\binom{s + t}{t} + 2t\cdot r_2(K_{s-1},P_n^t).$$

\noindent By the induction hypothesis, we have

$$N \leq t^{4s}n(\log n - 1)^{s-2} + 2\cdot 4^s + 2t\cdot t^{4s - 4} n(\log n)^{s-3}.$$

$$ \leq t^{4s}n(\log n)^{s-2} - (s-2)t^{4s}n(\log n)^{s-3} + (s-2)^2t^{4s}n(\log n)^{s-4} + 2\cdot 4^s + 2t^{4s  - 3}n (\log n)^{s-3}$$

$$\leq t^{4s}n(\log n)^{s-2}.$$

\end{proof}

The proof of Theorem \ref{main2} is very similar to the argument above.

\begin{proof}[Proof of Theorem \ref{main2}]  We proceed by induction on $n$.  The base case $n = 2$ is trivial.  Now assume that the statement holds for all $n' < n$.  Set $N = (2s)^{t(t + 1)\log n}$.  We start with a standard supersaturation argument.  For sake of contradiction, suppose there is a red/blue coloring $\chi:\binom{[N]}{2}\rightarrow \{\textnormal{red,blue}\}$ of the pairs of $[N]$ such that $\chi$ does not produce a red $K_s$ nor a blue $P_n^t$.  Let $r = r(K_s,K_{t + 1})$.  Then we must have at least

$$\frac{\binom{{N}}{r}}{\binom{N- (t + 1)}{r -(t + 1)}} = \frac{N!}{r!}\frac{(r - (t + 1))!}{(N - (t + 1))!} \geq \frac{(N - t)^{t + 1}}{r^{t +1}} \geq\frac{N^{t + 1}}{(2r)^{t  + 1}}$$

\noindent copies of $K_{t + 1}$.  For each blue copy of $K_{t + 1}$ with vertex set $x_0 < x_1 < \cdots < x_{t}$, we associate the middle $t-1$ vertices $\{x_1,\ldots, x_{t -1}$.    By the pigeonhole principle, there is a set $Y = \{x_1,x_2,\ldots, x_{t - 1}\}$ with $x_1 < x_2 < \cdots < x_{t -1}$, such that $Y$ is the middle set for at least

$$\frac{N^{t + 1}}{(2r)^{t + 1}}\frac{1}{N^{t-1}} \geq \frac{N^2}{(2r)^{t + 1}}$$

\noindent blue copies of $K_{t + 1}$.  Let $V_1 \subset \{1,2,\ldots,x_1 - 1\}$ such that $x \in V_1$ if there is a blue $K_{t + 1}$ whose middle set is $Y$ and $x$ is the first vertex of the blue $K_{t + 1}$.  Likewise, let $V_2 \subset  \{x_{t - 1} + 1,\ldots, N\}$ such that $x \in V_1$ if there is a blue $K_{t + 1}$ whose middle set is $Y$ and $x$ is the last vertex of the blue $K_{t + 1}$.  Hence, we have

$$|V_1||V_2| \geq \frac{N^2}{(2r)^{t + 1}}.$$

\noindent Moreover, $\chi$ colors all edges between $V_1$ and $Y$ blue, and all edges between $V_2$ and $Y$ blue.  Since $|V_1|,|V_2| < N$, we must have $|V_1|,|V_2| \geq \frac{N}{(2r)^{t + 1}}$.  Since the Erd\H os-Szekeres theorem implies that $r_2(K_s,K_{t + 1}) \leq \binom{s + t - 1}{t} \leq s^t$, we have

$$\min \{|V_1|,|V_2|\} \geq \frac{N}{(2s)^{t (t + 1)}} = \frac{(2s)^{t(t + 1)\log n}}{(2s)^{t(t + 1)}} \geq (2s)^{t(t + 1)\log \lfloor n/2\rfloor }.$$

\noindent By the inductive hypothesis, both $V_1$ and $V_2$ contain a blue $P_{\lfloor n/2\rfloor}^t$.  Together with the vertices in $Y$, we obtain a blue copy of $P_{2\lfloor n/2\rfloor + t-1}^t$.  Since $2\lfloor n/2\rfloor + t-1 \geq n$, this completes the proof.
\end{proof}


\end{document}